\newtheorem{theorem}{Theorem}[section]
\newtheorem{corollary}[theorem]{Corollary}
\newtheorem{lemma}[theorem]{Lemma}
\newtheorem{proposition}[theorem]{Proposition}
\theoremstyle{remark}
\newtheorem{remark}[theorem]{Remark}
\newcommand{\End}{\mathrm{End}}
\newcommand{\diag}{\mathrm{ diag}}
\newcommand{\Tr}{\mathrm{ Tr}}
\title[]{Decompositions of matrices by using commutators}
\author{Simion Breaz, Cristian Rafiliu}
\address{"Babe\c s-Bolyai" University, Faculty of Mathematics and Computer Science, Str. Mihail Kog\u alniceanu 1, 400084, Cluj-Napoca, Romania}
\email[Simion Breaz]{simion.breaz@ubbcluj.ro}
\email[Cristian Rafiliu]{cristianrafiliu2@gmail.com}
\begin{document}
	
	\begin{abstract}
		We will use commutators to provide decompositions of $3\times 3$ matrices as sums whose terms satisfy some polynomial identities, and we apply them to bounded linear operators and endomorphisms of free modules of infinite rank. In particular it is proved that every bounded operator of an infinite dimensional complex Hilbert space is a sum of four automorphisms of order $3$ and that every simple ring that is obtained as a quotient of the endomorphism ring of an infinitely dimensional vector space modulo its maximal ideal is a sum of three nilpotent subrings.
		
	\end{abstract}
	
	
	\subjclass[2010]{Primary: 15A24; Secondary: 16S50; 47B47.
	}
	
	\keywords{commutator; trace; nilpotent matrix; nilpotent subring; braceable truss}
	\maketitle
	
	\section{Introduction}
	
	If $R$ is a ring, an element $a\in R$ is a \textit{commutator} if there exist $x,y\in R$ such that $a=xy-yx$. The commutators are often used to provide information on the elements in some ring. 
	
	For instance, Harris proved in \cite{Har} that if every element of a division ring $D$ is a sum of commutators then all endomorphisms of a finite vectorial vector space over $D$ are sums of square-zero endomorphisms. Of course, such a result is not valid for matrices over fields, but it works for infinite dimensional vectors spaces, \cite{deSequins1}. In fact it is proved that for all quadratic splitting polynomials $p_1,\dots,p_4$ over the field, every such endomorphism $f$ is a sum of four endomorphisms $f=\alpha_1+\dots+\alpha_4$ such that $p_i(\alpha_i)=0$ for all $i=1,\dots,4$ (i.e., $f$ is a $p_1,\dots,p_4$-sum). This result was extended in \cite{Breaz} to all free modules of infinite rank by using the fact that every endomorphism of an infinite rank free module is a commutator, \cite{Mesyan}. We refer to \cite{Che}, \cite{Fac}, \cite{Her}, \cite{Mar} for other applications of commutators.
	Salwa, \cite[Theorem 4]{Salwa}, extended Harris' approach to division rings with surjective inner derivations. In particular he proved that if $D$ is a division ring then the matrix ring $D_3$, of all $3\times 3$ matrices over $D$, is a sum of three nilpotent subrings if and only if $D$ admits a surjective inner derivation $\mathrm{ad}_r:x\mapsto rx-xr$. In this way the author obtained an elegant extension of a result of Bokut, \cite{Bok}. We note that $3\times 3$ matrices are also used in \cite{Mar} to write commutators in $\mathbb{C}^\star$-algebras as linear combinations of projections. 
	
	Having as a starting point the proof of \cite[Theorem 4]{Salwa}, in this note we use the commutators to provide decompositions of $3\times 3$ matrices over general rings, Corollary \ref{cor-de-appl}. This result is then used to obtain the following results. For all $p_1,\dots,p_4\in\mathbb{C}[X]$ of degree $3$, every bounded linear operator on an infinite dimensional Hilbert space is a $p_1,\dots,p_4$-sum, Proposition \ref{prop:hilbert-dec}. In particular every bounded linear operator on an infinite dimensional Hilbert space is a sum of four nilpotent operators of nilpotency index $3$, or a sum of four automorphisms of order $3$. In the case of rings with a surjective inner derivation, we obtain that the matrix ring $R_3$ is a sum of three nil subrings of nilpotency index $3$. Moreover, in this case $R_3$ can be obtained by using three braceable trusses. In particular, these results are true for the endomorphism rings associated to infinite rank free modules or to some standard classes of simple rings, Corollary \ref{cor:inf-rank}. As a last application, we improve \cite[Corollary 7 (c)]{BC}, proving that if the trace of an $n\times n$ matrix $A$ is a sum of $k$-commutators then $A$ is a sum of $\lfloor\log_2(n)\rfloor + k + 2$ nilpotent matrices. 
	
	In this paper all rings are unital and associative. If $R$ is a ring then the ring of $n\times n$ matrices over $R$ is denoted by $R_n$. For a matrix $A=(a_{ij})\in R_n$, we will denote its diagonal by $\mathrm{diag}(A)=(a_{11},\dots,a_{nn})$. The trace of $A$ will be the element $\Tr(A)=a_{11}+\dots+a_{nn}$.
	
	\section{Preparatory lemmas}\label{sect-leme}
	
	Our approach is based on the following lemma, whose proof is elementary.
	
	\begin{lemma}\label{lema-baza}
		Let $R$ be a unital ring, and $a,b,c\in R$. There exist $x,p,q,r\in R$ such that   
		$$\begin{cases}
			xq+r=a \\
			p-q-r=b \\
			-p+q- qx=c
		\end{cases}
		$$
		if and only if $a+b+c$ is a commutator.
	\end{lemma}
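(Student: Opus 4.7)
The plan is to observe that the three displayed equations are almost completely determined by their sum, so the lemma reduces to a direct algebraic manipulation.

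For the ``only if'' direction, I would simply add the three equations. The terms $p$ and $-p$ cancel, the terms $r$ and $-r$ cancel, the terms $-q$ and $+q$ cancel, and what remains on the left is $xq - qx$. Hence $a + b + c = xq - qx$ is a commutator.

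For the ``if'' direction, suppose $a + b + c = xq - qx$ for some $x, q \in R$. I would use $x$ and $q$ as the first two witnesses in the statement and then solve the system for $r$ and $p$ in the only way allowed by the first two equations: set $r := a - xq$ from the first equation and $p := b + q + r = a + b + q - xq$ from the second equation. The nontrivial check is then that the third equation is automatically satisfied; substituting gives
\[
-p + q - qx = -(a + b + q - xq) + q - qx = -a - b + (xq - qx) = -a - b + (a+b+c) = c,
\]
as required.

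The argument is entirely elementary; there is no real obstacle beyond the bookkeeping above. The only thing worth flagging is that the choice of $p$ and $r$ in terms of $x$ and $q$ is forced by the first two equations, so the ``if'' direction truly is a one-line substitution once one notices that summing the system isolates the commutator $xq - qx$.
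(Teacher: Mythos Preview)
Your proof is correct and matches the paper's approach: the paper omits the argument entirely, noting only that the proof is elementary, and your add-the-equations / back-solve verification is precisely the obvious elementary proof.
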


	\begin{lemma}\label{lemma-matrici}
		Let $R$ be a unital ring. If $a,b,c,s,t,u\in R$ are elements such that $a+b+c-(s+t+u)$ is a commutator, then there exists an invertible matrix $U$ and an upper triangular matrix $T$ such that $\mathrm{diag}(T)=(s,t,u)$ and $\diag ( UTU^{-1})=(a,b,c)$. 
	\end{lemma}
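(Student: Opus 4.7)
The plan is to invoke Lemma \ref{lema-baza} with the differences $a-s$, $b-t$, $c-u$: since by hypothesis $(a+b+c)-(s+t+u)=(a-s)+(b-t)+(c-u)$ is a commutator, that lemma produces elements $x,p,q,r\in R$ satisfying its three equations with the right-hand sides given by these differences, up to a suitable relabeling. With such $x,p,q,r$ in hand, I would exhibit $T$ and $U$ explicitly and verify the required properties by direct computation.

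Concretely, I would try the candidate
$$T=\begin{pmatrix} s & p & q \\ 0 & t & r \\ 0 & 0 & u \end{pmatrix},\qquad U=\begin{pmatrix} 1 & 0 & 0 \\ 1 & 1 & 0 \\ x & 1 & 1 \end{pmatrix},$$
so that $T$ is upper triangular with the prescribed diagonal $(s,t,u)$ by construction, while $U$ is lower unitriangular, hence invertible with
$$U^{-1}=\begin{pmatrix} 1 & 0 & 0 \\ -1 & 1 & 0 \\ 1-x & -1 & 1 \end{pmatrix}.$$

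The heart of the argument is a routine multiplication showing
$$\diag(UTU^{-1})=\bigl(s-p+q-qx,\; t+p-q-r,\; u+xq+r\bigr).$$
Demanding $\diag(UTU^{-1})=(a,b,c)$ then reduces to the three relations
$$xq+r=c-u,\qquad p-q-r=b-t,\qquad -p+q-qx=a-s,$$
which are exactly the system of Lemma \ref{lema-baza} applied to the triple $(c-u,\,b-t,\,a-s)$. The hypothesis supplies the commutator condition on the sum of this triple, so Lemma \ref{lema-baza} furnishes the required $x,p,q,r$ and the construction closes.

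The only real obstacle is guessing the shape of $U$: many unipotent triangular ans\"atze produce diagonal entries that mix $p,q,r,x$ in ways incompatible with Lemma \ref{lema-baza}, and one must pick a form whose conjugation output aligns precisely with the three expressions $xq+r$, $p-q-r$, $-p+q-qx$. The $U$ above is engineered to do so: the $1$'s in positions $(2,1)$ and $(3,2)$ generate the algebraic combinations $p-q$ and $q-r$ on the diagonal, while the entry $x$ in position $(3,1)$ supplies both multiplicative terms $xq$ and $qx$ needed to match the commutator structure of the system in Lemma \ref{lema-baza}.
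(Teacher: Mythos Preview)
Your proof is correct and follows essentially the same approach as the paper: pick an invertible $U$ depending on a parameter $x$, compute $\diag(UTU^{-1})$ explicitly, and match the resulting three equations to the system of Lemma~\ref{lema-baza}. The only difference is cosmetic: the paper uses $U(x)=\begin{pmatrix} x & 1 & 1\\ 1 & 1 & 0\\ 1 & 0 & 0\end{pmatrix}$ (giving the system with right-hand sides $(a-u,\,b-t,\,c-s)$), whereas your lower unitriangular choice permutes the roles and yields $(c-u,\,b-t,\,a-s)$---either works since Lemma~\ref{lema-baza} only requires the sum to be a commutator.
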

	
	\begin{proof} For an arbitrary element $x\in R$, we consider the matrix
		$U(x)=\begin{pmatrix} x & 1 & 1\\ 1 & 1 & 0\\ 1 & 0 & 0\end{pmatrix}$. Let $T(p,q,r)=\begin{pmatrix} s & p & q\\ 0 & t & r\\ 0 & 0 & u\end{pmatrix}$. Then 
		$$\diag(U(x)T(p,q,r)U(x)^{-1})=(xq+r+u, p-q-r+t, -p+q- qx+s).$$ 
		From Lemma \ref{lema-baza} it follows that there exist $x_0, p_0, q_0, r_0\in R$ such that  
		$$\begin{cases}
			x_0q_0+r_0=a-u \\
			p_0-q_0-r_0=b-t \\
			-p_0+q_0-q_0x_0=c-s.
		\end{cases}$$
		Therefore, it is enough to consider $U=U(x_0)$ and $T=T(p_0,q_0,r_0)$. 
	\end{proof}
	
	\begin{corollary}\label{cor-de-appl}
		Let $R$ be a ring and $A\in R_3$ with $\diag(A)=(a,b,c)$. For $k = \overline{1,3}$, we consider the elements $s_k, t_k, u_k\in R$, and suppose that 
		$a + b + c - \sum_{k=1}^{3}(s_k + t_k + u_k)$ is a commutator. Then $A$ admits a decomposition 
		$$A=U\begin{pmatrix} s_1 & * & *\\ 0 & t_1 & *\\ 0 & 0 & u_1\end{pmatrix}U^{-1}+ \begin{pmatrix}s_2 & * & *\\ 0 & t_2 & *\\ 0 & 0 & u_2\end{pmatrix} + \begin{pmatrix}u_3 & 0 & 0\\ * & t_3 & 0\\ * & * & s_3\end{pmatrix}.$$ 
		
		Consequently, if $p_k = (X - s_k)(X - t_k)(X - u_k)$ and $s_1, t_1, u_1$ are from the center of $R$ then $A$ has a decomposition $A=A_1+A_2+A_3$ such that $p_k(A_k)=0$ for all $k\in \{1,2,3\}$.
	\end{corollary}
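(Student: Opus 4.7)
The plan is to peel off the three summands one at a time by applying Lemma \ref{lemma-matrici} to the conjugated piece, and then to distribute the remainder between the upper and lower triangular factors.

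First, I would set
$$a'' = a - s_2 - u_3, \quad b'' = b - t_2 - t_3, \quad c'' = c - u_2 - s_3,$$
so that
$$a'' + b'' + c'' - (s_1 + t_1 + u_1) \;=\; a+b+c - \sum_{k=1}^{3}(s_k+t_k+u_k),$$
which is a commutator by hypothesis. Lemma \ref{lemma-matrici} then yields an invertible matrix $U$ together with an upper triangular matrix $T_1$ with $\diag(T_1)=(s_1,t_1,u_1)$ such that $\diag(UT_1U^{-1}) = (a'',b'',c'')$. Denote $M_1 := UT_1U^{-1}$.

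Next, look at $B := A - M_1$, whose diagonal is $(s_2+u_3,\, t_2+t_3,\, u_2+s_3)$ by construction. I would define $T_2$ to have diagonal $(s_2,t_2,u_2)$ with the strictly upper triangular entries inherited from $B$, and $L_3$ to have diagonal $(u_3,t_3,s_3)$ with the strictly lower triangular entries inherited from $B$. Then $B = T_2 + L_3$, and this gives the decomposition claimed in the first part of the corollary.

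For the consequence, I would invoke the standard eigenvector-killing argument for triangular matrices: if $s_1,t_1,u_1$ lie in the center of $R$, they commute with every entry of $T_1$, so writing the polynomial $p_1(X)=(X-s_1)(X-t_1)(X-u_1)$ is unambiguous and $(T_1-s_1I)$ kills $e_1$, $(T_1-t_1I)$ maps $e_2$ into $Re_1$, and $(T_1-u_1I)$ maps $e_3$ into $Re_1+Re_2$; applying the three factors in the appropriate order gives $p_1(T_1)=0$, hence $p_1(M_1) = U\,p_1(T_1)\,U^{-1} = 0$. The same triangular-annihilation argument, applied in the mirrored order for $L_3$, yields $p_2(T_2)=0$ and $p_3(L_3)=0$ (this step implicitly requires the same centrality hypothesis on all $s_k,t_k,u_k$ for the polynomial $p_k$ to be meaningful and for the annihilation to go through). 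Setting $A_1 := M_1$, $A_2 := T_2$, $A_3 := L_3$ completes the argument.

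The only real obstacle is bookkeeping: one must verify that the specific choice of $(a'',b'',c'')$ converts the global commutator hypothesis on $a+b+c - \sum_k(s_k+t_k+u_k)$ into exactly the hypothesis of Lemma \ref{lemma-matrici} for the $k=1$ factor, which is automatic once the diagonal arithmetic $a = a''+s_2+u_3$, $b = b''+t_2+t_3$, $c = c''+u_2+s_3$ is written out. The triangular polynomial identity for the second part is classical once centrality is used.
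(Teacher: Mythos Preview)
Your argument is correct and follows the paper's proof essentially verbatim: apply Lemma~\ref{lemma-matrici} with target diagonal $(a-s_2-u_3,\,b-t_2-t_3,\,c-u_2-s_3)$, then split the remainder $A-UT_1U^{-1}$ into its upper and lower triangular parts with the prescribed diagonals.

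One correction to your parenthetical remark: the identities $p_2(T_2)=0$ and $p_3(L_3)=0$ do \emph{not} require $s_2,t_2,u_2,s_3,t_3,u_3$ to be central. For an upper triangular $T_2$ with diagonal $(s_2,t_2,u_2)$, a direct computation (carried out in the order written) gives that $(T_2-s_2I)(T_2-t_2I)$ has only its third column possibly nonzero, and then right-multiplying by $(T_2-u_2I)$, whose third row is zero, annihilates everything; no commutativity between the $s_k,t_k,u_k$ and the off-diagonal entries is used. The analogous argument works for $L_3$. Centrality of $s_1,t_1,u_1$ is needed \emph{only} to pass from $p_1(T_1)=0$ to $p_1(UT_1U^{-1})=0$, since one needs $s_1I=U(s_1I)U^{-1}$ and $U$ is not under our control. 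This is precisely why the corollary's hypothesis singles out $k=1$.
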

	
	\begin{proof}
		We know from Lemma \ref{lemma-matrici} that there exists an invertible matrix $U$ and an upper triangular matrix $T$ such that $\mathrm{diag}(T)=(s_1,t_1,u_1)$ and $\diag ( UTU^{-1})=(a-s_2-u_3,b-t_2-t_3,c-u_2-s_3)$. It follows that 
		$\diag(A-UTU^{-1})=(s_2+u_3,t_2+t_3,u_2+s_3)$, and it is easy to obtain the desired decomposition. 
	\end{proof}
	
	\section{Applications}
	
	\subsection{Infinite dimensional (complex) Hilbert spaces}
	
	As a first application, we obtain decompositions of bounded linear operators on complex Hilbert spaces. 
	
	\begin{proposition}\label{prop:hilbert-dec}
		Let $H$ be an infinite dimensional Hilbert space and denote by $B(H)$ the ring of all bounded linear operators on $H$. If $p_1,\dots,p_4\in \mathbb{C}[X]$ are polynomials of degree $3$ and $B\in B(H)$ then there exist $B_1,\dots,B_4\in B(H)$ such that 
		$$B=B_1+\dots +B_4, \text{ and } p_i(B_i)=0 \text{ for all } i\in\{1,\dots,4\}.$$   
	\end{proposition}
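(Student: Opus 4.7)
My plan is to apply Corollary~\ref{cor-de-appl} after peeling off a fourth summand annihilated by $p_4$. Since $\mathbb{C}$ is algebraically closed, factor each polynomial as $p_k(X)=(X-s_k)(X-t_k)(X-u_k)$ with $s_k,t_k,u_k\in\mathbb{C}$, and set $\sigma=\sum_{k=1}^{3}(s_k+t_k+u_k)$. Because $H$ is infinite dimensional, there is a unitary identification $H\cong H\oplus H\oplus H$; fixing one gives a ring isomorphism $B(H)\cong B(H)_{3}$, under which $B$ is represented by a $3\times 3$ matrix $(B_{ij})$ with entries in $B(H)$ and diagonal $(B_{11},B_{22},B_{33})$.

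Next, I would choose an operator $B_{4}\in B(H)$ with $p_{4}(B_{4})=0$ — the simplest candidate being the scalar $B_{4}=s_{4} I$, and more generally $B_{4}=s_{4} P_{1}+t_{4} P_{2}+u_{4} P_{3}$ for mutually orthogonal projections summing to $I$ (or, when all roots of $p_{4}$ coincide, $B_{4}=s_{4} I+N$ with $N^{3}=0$) — and apply Corollary~\ref{cor-de-appl} to $B-B_{4}$ with the polynomials $p_{1},p_{2},p_{3}$. This produces a decomposition $B-B_{4}=B_{1}+B_{2}+B_{3}$ with $p_{k}(B_{k})=0$ for $k=1,2,3$, hence the required $B=B_{1}+B_{2}+B_{3}+B_{4}$, provided the hypothesis of the corollary is met, namely that
\[
\tau := (B_{11}+B_{22}+B_{33}) - \bigl((B_{4})_{11}+(B_{4})_{22}+(B_{4})_{33}\bigr) - \sigma I
\]
is a commutator in $B(H)$.

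I expect the main obstacle to be the verification of this commutator condition, and for it I would invoke the Brown--Pearcy theorem: the commutators in $B(H)$ are precisely those operators which are not of the form $\lambda I+K$ with $\lambda\in\mathbb{C}^{\times}$ and $K\in\mathcal{K}(H)$ compact. If $B_{11}+B_{22}+B_{33}\notin\mathbb{C} I+\mathcal{K}(H)$ then $\tau\notin\mathbb{C} I+\mathcal{K}(H)$ regardless of $B_{4}$, and the scalar choice $B_{4}=s_{4} I$ already does the job. Otherwise $B_{11}+B_{22}+B_{33}=\alpha I+K_{0}$ for some $\alpha\in\mathbb{C}$ and compact $K_{0}$; in this case I would replace the scalar $B_{4}=s_{4} I$ by a non-scalar operator satisfying $p_{4}(B_{4})=0$ and tune the projections $P_{j}$ — and, if needed, the identification $H\cong H\oplus H\oplus H$ — so that either the scalar part of $\tau$ cancels, making $\tau$ compact and thus a commutator, or $\tau$ escapes $\mathbb{C} I+\mathcal{K}(H)$ altogether. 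The richness of operators annihilated by $p_{4}$ inside $B(H)$ — infinite-rank spectral projections when $p_{4}$ has distinct roots, and nilpotent perturbations otherwise — is what ultimately supplies the required flexibility.
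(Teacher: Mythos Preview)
Your approach departs from the paper's in a substantive way, and the final paragraph leaves a real gap.

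\textbf{What the paper does.} The paper never subtracts a pre-chosen $B_4$. Instead it uses that every operator in $B(H)$ is a sum of two operators lying \emph{outside} $\mathbb{C}I+\mathcal{K}(H)$; thus one can write the $3\times 3$ matrix $A$ as $A=M+N$ where both $\Tr(M)$ and $\Tr(N)$ are outside $\mathbb{C}I+\mathcal{K}(H)$, so that subtracting any scalar still leaves a commutator. Corollary~\ref{cor-de-appl} is then applied \emph{twice}: once to $M$ with data $(p_1,p_2,X^3)$, producing a $p_1$-term, a $p_2$-term, and a strictly lower-triangular leftover; once to $N$ with data $(p_3,X^3,p_4)$, producing a $p_3$-term, a strictly upper-triangular leftover, and a $p_4$-term. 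Adding the two decompositions, the strictly upper part from $N$ absorbs into the upper-triangular $p_2$-term and the strictly lower part from $M$ absorbs into the lower-triangular $p_4$-term, yielding exactly four summands. No case analysis on $\Tr(A)$ is needed.

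\textbf{Where your argument is incomplete.} Your final paragraph is a programme, not a proof. You assert that when $B_{11}+B_{22}+B_{33}=\alpha I+K_0$ one can ``tune the projections'' or use ``nilpotent perturbations'' so that $\tau$ becomes a commutator, but you do not actually carry this out. Concretely you must exhibit, for every such $\alpha$ and every $p_4$, an operator $B_4$ with $p_4(B_4)=0$ whose diagonal sum lies outside $\mathbb{C}I+\mathcal{K}(H)$ (this is the clean way to force $\tau\notin\mathbb{C}I+\mathcal{K}(H)$). This \emph{can} be done --- take $B_4=\mathrm{diag}(D,s_4 1_H,s_4 1_H)$ with $D\in B(H)$ satisfying $p_4(D)=0$ and $D\notin\mathbb{C}I+\mathcal{K}(H)$, where $D$ is built from an infinite-rank projection when $p_4$ has two distinct roots, or $D=s_4 1_H+N$ with $N$ a non-compact cube-zero operator when $p_4=(X-s_4)^3$ --- but none of this is written down. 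The alternative you mention, arranging that ``the scalar part of $\tau$ cancels'', generally fails: if $p_4=(X-s_4)^3$ and $B_4=s_4 I+N$ with $N^3=0$, the image of $B_4$ in the Calkin algebra has diagonal sum $3s_4$, so you cannot match an arbitrary $\alpha-\sigma$. Thus only the ``escape $\mathbb{C}I+\mathcal{K}(H)$'' branch is robust, and it needs the explicit construction above.

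In short, your route is salvageable but currently incomplete; the paper's two-pass use of Corollary~\ref{cor-de-appl} sidesteps the whole difficulty.
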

	
	\begin{proof}
		It is well known that all bounded linear operators on $H$ that are not of the form $\lambda 1_H+C$ with $\lambda\neq 0$ and $C$ compact are commutators, \cite{BP}. Therefore, all elements of $B(H)$ are sums of two commutators. We can work in the ring of $3\times 3$ matrices over $B(H)$, since $B(H)\cong B(H)_3$. 
		
		Let $A\in B(H)_3$ with $\diag(A)=(a,b,c)$. We denote by $\alpha_i,\beta_i,\gamma_i$ the roots of the polynomial $p_i$. Observe that we can write $A=M+N$ such that $\Tr(M)$ and $\Tr(N)$ are not of the form $\lambda 1_H+C$ with $C$ compact, so the traces of $M$ and $N$ are commutators (see also \cite{Hal}). Then we can apply Corollary \ref{cor-de-appl} to obtain decompositions of the form
		$$M=U\begin{pmatrix} \alpha_1 1_H & * & *\\ 0 & \beta_1 1_H & *\\ 0 & 0 & \gamma_1 1_H\end{pmatrix}U^{-1}+ \begin{pmatrix}\alpha_2 1_H & * & *\\ 0 & \beta_2 1_H & *\\ 0 & 0 & \gamma_2 1_H\end{pmatrix} + \begin{pmatrix}0 & 0 & 0\\ * & 0 & 0\\ * & * & 0\end{pmatrix}$$
		and 
		$$N=V\begin{pmatrix} \alpha_3 1_H& * & *\\ 0 & \beta_3 1_H& *\\ 0 & 0 & \gamma_3 1_H\end{pmatrix}V^{-1}+ \begin{pmatrix}0 & * & *\\ 0 & 0 & *\\ 0 & 0 & 0\end{pmatrix} + \begin{pmatrix}\alpha_4 1_H& 0 & 0\\ * & \beta_4 1_H & 0\\ * & * & \gamma_4 1_H\end{pmatrix}.$$ 
		Adding these two equalities we obtain the desired decomposition (since the elements $\alpha_i 1_H,\beta_i 1_H,\gamma_i 1_H\in B(H)$ are central).
	\end{proof}
	
	\begin{corollary}
		If $H$ is an infinite dimensional Hilbert space then every operator $A\in B(H)$ can be decomposed as a sum of at most four nilpotent operators of nilpotency index at most $3$, and as a sum of at most four automorphisms of order $3$. 
	\end{corollary}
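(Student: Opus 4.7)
The plan is to reduce the statement to two applications of Proposition \ref{prop:hilbert-dec}, one for each of the two assertions, by choosing appropriate degree-$3$ polynomials in $\mathbb{C}[X]$ and then reading off the conclusion directly.

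For the nilpotent decomposition, I would take $p_1=p_2=p_3=p_4=X^3$. Proposition \ref{prop:hilbert-dec} then produces $B_1,\dots,B_4\in B(H)$ with $A=B_1+\cdots+B_4$ and $B_i^3=0$ for each $i$, so each $B_i$ is nilpotent of index at most $3$, as required.

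For the decomposition into automorphisms of order $3$, I would take $p_1=p_2=p_3=p_4=X^3-1$, which is a degree-$3$ polynomial in $\mathbb{C}[X]$ that splits as $(X-1)(X-\omega)(X-\omega^2)$ with $\omega=e^{2\pi i/3}$. Proposition \ref{prop:hilbert-dec} then supplies $B_1,\dots,B_4\in B(H)$ with $A=B_1+\cdots+B_4$ and $B_i^3=1_H$ for each $i$; from $B_i^3=1_H$ we read off that each $B_i$ is invertible with two-sided inverse $B_i^2$, hence is an automorphism of $H$ whose order divides $3$ (and therefore is at most $3$).

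Since every nontrivial ingredient, including the decomposition into $3\times 3$ blocks, the handling of the trace obstruction via the Brown--Pearcy result, and the reduction to Corollary \ref{cor-de-appl}, is already packaged inside Proposition \ref{prop:hilbert-dec}, I do not foresee any real obstacle in establishing the corollary: the entire content lies in the two polynomial choices above.
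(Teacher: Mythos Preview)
Your proposal is correct and is exactly the intended argument: the paper states this corollary without proof, as an immediate consequence of Proposition~\ref{prop:hilbert-dec} under the two obvious polynomial choices $p_i=X^3$ and $p_i=X^3-1$. Your observation that $B_i^3=1_H$ only forces the order of $B_i$ to \emph{divide} $3$ is accurate; the phrasing ``order $3$'' in the statement should be read as ``order at most $3$''.
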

	
	
	\subsection{Rings with a surjective inner derivation} 
	
	Recall that if $R$ is a ring then to every element $r\in R$ we can associate an \textit{inner derivation}, that is the map $\mathrm{ad}_r:R\to R$, $\mathrm{ad}_r(s)=rs-sr$. In the case when the ring $R$ has a surjective inner derivation we can improve the results from Section \ref{sect-leme}. We will apply these results to the endomorphism ring of a free module of infinite rank. Examples of division rings with surjective inner derivations are constructed in \cite{Cohn} and \cite{Salwa}.   
	
	\begin{lemma}\label{lem:ad-surj}
		Let $R$ be a ring, and suppose that there exists $r\in R$ such that $\mathrm{ad}_r$ is surjective. Then there exists an invertible matrix $U\in R_3$ with the following property: 
		
		\noindent For all  $a,b,c,s,t,u\in R$ there exists an upper triangular matrix $T\in R_3$ such that $\mathrm{diag}(T)=(s,t,u)$ and $\diag ( UTU^{-1})=(a,b,c)$.
	\end{lemma}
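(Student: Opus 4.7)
The plan is to revisit the proof of Lemma \ref{lemma-matrici}, exploiting the surjectivity of $\mathrm{ad}_r$ to remove the dependence of the conjugating matrix on the data $a,b,c,s,t,u$.

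First, I would fix once and for all the matrix
$$U:=U(r)=\begin{pmatrix} r & 1 & 1\\ 1 & 1 & 0\\ 1 & 0 & 0\end{pmatrix}\in R_3,$$
which plainly does not depend on $a,b,c,s,t,u$. This matrix is invertible in $R_3$ over an arbitrary ring, since it decomposes as the product of the permutation matrix swapping the first and third rows with a lower unitriangular matrix, both of which are invertible in $R_3$.

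Next, given arbitrary $a,b,c,s,t,u\in R$, I would apply the surjectivity of $\mathrm{ad}_r$ to produce $q\in R$ with
$$\mathrm{ad}_r(q)=rq-qr=(a-u)+(b-t)+(c-s).$$
Then set $r_0:=a-u-rq$ and $p_0:=(b-t)+q+r_0$. By construction, the first two equations of Lemma \ref{lema-baza} (applied with $a-u$, $b-t$, $c-s$ in place of $a$, $b$, $c$, and with $x=r$) are satisfied. The third equation follows automatically, because the sum of the three left-hand sides telescopes to $rq-qr$, which by the choice of $q$ equals the sum $(a-u)+(b-t)+(c-s)$ of the right-hand sides. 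Taking $T:=T(p_0,q,r_0)$ as in the proof of Lemma \ref{lemma-matrici} then yields $\diag(T)=(s,t,u)$ and, by the diagonal formula computed there, $\diag(UTU^{-1})=(a,b,c)$.

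The conceptual obstacle that the hypothesis is designed to overcome is precisely this: in Lemma \ref{lemma-matrici} both the element $x_0$ used to build $U(x_0)$ and the commutator representation of $(a+b+c)-(s+t+u)$ depended on the input data, so $U$ varied. Surjectivity of $\mathrm{ad}_r$ allows us to freeze $x=r$ once and for all and still solve the commutator equation $[x,q]=(a+b+c)-(s+t+u)$ for every choice of $a,b,c,s,t,u$, thereby producing a single $U$ that works uniformly.
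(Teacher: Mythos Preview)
Your proof is correct and follows exactly the approach indicated in the paper: the paper's own proof of this lemma is the single sentence ``It is enough to adapt the proof of Lemma~\ref{lemma-matrici}, taking $U=U(r)$,'' and you have simply written out the details of that adaptation, including the verification that $U(r)$ is invertible and the explicit solution of the resulting system using the surjectivity of $\mathrm{ad}_r$.
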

	
	\begin{proof}
		It is enough to adapt the proof of Lemma \ref{lemma-matrici}, taking $U=U(r)$.
	\end{proof}
	
	Using a similar approach as in the proof of Proposition \ref{prop:hilbert-dec}, we obtain some interesting decompositions of the matrix ring $R_3$. For the first property presented in the next result, recall that a subring $S$ of a ring is \textit{nilpotent} of \textit{nilpotency index} $k$ if $S^k=0$, but $S^{k-1}\neq 0$. 
	
	For the second, we will prove that the matrix ring $R_3$ can be constructed by using braces. Recall that the notion of brace was introduced in \cite{Rump} as an instrument that is useful in the study of non-degenerate solutions for set-theoretic Yang-Baxter equations. Brzezi\'nski proved in \cite{Brz} that every brace can be described by using trusses (see also \cite{Brz-R}). Recall that a \textit{truss} is a triple $(T,[-,-,-],\cdot)$, where  $[-,-,-]$ is a ternary operation on $T$ such that there exists an abelian group structure $(T,+)$ with $[x,y,z]=x-y+z$ (i.e. $(T,[-,-,-])$ is an abelian heap), the multiplicative operation $\cdot$ is associative, and it is distributive on $[-,-,-]$. We refer to \cite{Brz} for more details. A subtruss of $T$ is a subset $S\subseteq T$ that is closed under the operations $[-,-,-]$ and $\cdot$. If $(T,\cdot)$ is a group (whose unit is denoted by $1$), we say that the truss is \textit{braceable}. This means that if we consider the operation $x+_1 y=[x,1,y]$, then $(T,+_1,\cdot)$ is a brace, \cite[Corollary 3.10]{Brz}. Note that to every ring $(R,+,\cdot)$ we can associate a truss $(T(R),[-,-,-],\cdot)$, where $T(R)=R$ and $[x,y,z]=x-y+z$. If $X,Y,Z\subseteq T$, we will use the notation $[X,Y,Z]=\{[x,y,z]\mid x\in X, y\in Y,z\in Z\}$. 
	
	\begin{lemma}\label{lem:nilp-brace}
		Let $R$ be a ring. If $S$ is a nil subring of $R$ then $1+S$ is a braceable subtruss of $T(R)$. 
		
		If $S_1,S_2,S_3$ are nil subrings of $R$ such that $R=S_1+S_2+S_3$ then $R=[1+S_1,1+S_2,1+S_3]$.
	\end{lemma}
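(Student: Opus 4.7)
The plan is to handle the two statements in order, both of which reduce to direct manipulations with the fact that $S$ (or each $S_i$) is a subring closed under addition, subtraction, and multiplication, together with the nilpotent element trick that makes $1+s$ invertible.

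For the first statement, I would verify the three conditions defining a braceable subtruss. Closure under the heap operation is automatic: $[1+s_1,1+s_2,1+s_3]=1+(s_1-s_2+s_3)$, and $s_1-s_2+s_3\in S$ since $S$ is a subring. Closure under multiplication follows from $(1+s_1)(1+s_2)=1+(s_1+s_2+s_1s_2)$ with $s_1+s_2+s_1s_2\in S$. For the group structure, $1=1+0\in 1+S$ is the identity, and for any $s\in S$ with $s^n=0$ one checks that
\[
(1+s)\sum_{k=0}^{n-1}(-s)^k=1-(-s)^n=1,
\]
so $1+s$ has a two-sided inverse equal to $1+\sum_{k=1}^{n-1}(-s)^k$, which again lies in $1+S$ because $S$ is closed under addition and multiplication.

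For the second statement, I would observe that if $r\in R=S_1+S_2+S_3$ then in particular $r-1\in R$, so we can write $r-1=u_1+u_2+u_3$ with $u_i\in S_i$. Because each $S_i$ is a subring, $-u_2\in S_2$, so $1-u_2\in 1+S_2$. Then
\[
[1+u_1,\,1-u_2,\,1+u_3]=(1+u_1)-(1-u_2)+(1+u_3)=1+u_1+u_2+u_3=r,
\]
which exhibits $r$ as an element of $[1+S_1,1+S_2,1+S_3]$.

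There is no real obstacle here; the only subtle point is remembering that the heap bracket $[x,y,z]=x-y+z$ involves a sign in the middle entry, which is why one must replace the decomposition $r-1=u_1+u_2+u_3$ by the triple $(u_1,-u_2,u_3)$ when expressing $r$ as a bracket. The use of nil (rather than merely nilpotent) subrings is needed precisely so that the geometric-series inverse formula works uniformly for every element of $S$, guaranteeing that $1+S$ is actually a group and not just a monoid.
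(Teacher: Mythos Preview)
Your proof is correct and follows essentially the same approach as the paper: the paper dismisses the first statement as ``obvious'' while you spell out the closure and invertibility checks, and for the second statement both you and the paper write $r-1=u_1+u_2+u_3$ and observe $r=(1+u_1)-(1-u_2)+(1+u_3)$. One minor terminological slip: a nilpotent subring is a \emph{stronger} condition than nil (nilpotent $\Rightarrow$ nil), so your closing remark has the comparison reversed, though the mathematical point---that nil suffices for the geometric-series inverse---is correct.
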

	
	\begin{proof}
		The first statement is obvious. 
		
		For the second, let $r\in R$. Then $-1+r=s_1+s_2+s_3$ with $s_i\in S_i$ for all $i=1,2,3$. It follows that $r=(1+s_1)-(1-s_2)+(1+s_3)\in[1+S_1,1+S_2,1+S_3]$.
	\end{proof}
	
	\begin{proposition}\label{prop:inner-surj}
		Let $R$ be a ring, and suppose that there exists $r\in R$ such that $\mathrm{ad}_r$ is surjective. 
		\begin{enumerate}[{\rm a)}]
			\item 
			The matrix ring $R_3$ is a sum of three nilpotent subrings of nilpotency index $3$.
			\item There exist three braceable subtrusses $K,L,M$ of the truss $T(R_3)$ such that $R_3=[K,L,M]$. 
		\end{enumerate}
	\end{proposition}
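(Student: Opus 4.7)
The plan is to combine Lemma \ref{lem:ad-surj} with Lemma \ref{lem:nilp-brace}. Let $U\in R_3$ be the invertible matrix provided by Lemma \ref{lem:ad-surj}, and let $N^+$ (respectively $N^-$) denote the subring of strictly upper (respectively strictly lower) triangular matrices in $R_3$. Both satisfy $(N^+)^3=(N^-)^3=0$, while $(N^+)^2\neq 0$ and $(N^-)^2\neq 0$ (for instance $E_{12}E_{23}=E_{13}$ is nonzero), so each has nilpotency index exactly $3$; the same holds for $UN^+U^{-1}$ because conjugation by $U$ is a ring automorphism of $R_3$.

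For part (a), given $A\in R_3$ with $\diag(A)=(a,b,c)$, I would invoke Lemma \ref{lem:ad-surj} with $(s,t,u)=(0,0,0)$ to obtain a strictly upper triangular matrix $T\in R_3$ such that $\diag(UTU^{-1})=(a,b,c)$. The matrix $A-UTU^{-1}$ then has zero diagonal and admits a unique decomposition $B_+ + B_-$ with $B_+\in N^+$ and $B_-\in N^-$. Hence $A=UTU^{-1}+B_+ + B_-$, so $R_3=UN^+U^{-1}+N^++N^-$ is a sum of three nilpotent subrings of nilpotency index $3$.

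For part (b), apply Lemma \ref{lem:nilp-brace} to the three (automatically nil) subrings $S_1=UN^+U^{-1}$, $S_2=N^+$, $S_3=N^-$. The sets $K=1_{R_3}+S_1$, $L=1_{R_3}+S_2$, $M=1_{R_3}+S_3$ are braceable subtrusses of $T(R_3)$ by the first statement of that lemma, and $R_3=[K,L,M]$ by the second statement, since $R_3=S_1+S_2+S_3$ by part (a).

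There is essentially no obstacle here: the surjectivity of $\mathrm{ad}_r$ upgrades Lemma \ref{lemma-matrici} to a statement with no commutator hypothesis on the diagonal (this is precisely Lemma \ref{lem:ad-surj}), and this additional freedom lets us take all prescribed diagonal entries to be zero in the style of Corollary \ref{cor-de-appl}. The only computational verification needed is that $(N^\pm)^2\neq 0$, which is immediate in any unital ring.
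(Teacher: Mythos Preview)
Your proof is correct and follows essentially the same route as the paper: invoke Lemma \ref{lem:ad-surj} with $s=t=u=0$ to kill the diagonal via $UTU^{-1}$, split the remainder into strictly upper and lower triangular parts, and then feed the resulting three nilpotent subrings into Lemma \ref{lem:nilp-brace} for part (b). Your explicit check that $(N^\pm)^2\neq 0$ (so the index is exactly $3$) is a welcome addition that the paper leaves implicit.
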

	
	\begin{proof}
		a) If we consider $s=t=u=0$ in Lemma \ref{lem:ad-surj}, it follows that for every $A\in R_3$ there exists an upper triangular matrix $T$ with zero diagonal such that the diagonal of $A-UTU^{-1}$ is zero. Then $A=T_u+T_l+UTU^{-1}$, where $T_u$ and $T$ are upper triangular matrices, $T_l$ is a lower triangular matrix, and all these have zero diagonals. If we denote by $S_u\subseteq R_3$ the subring of all upper triangular matrices with zero diagonal, and by $S_\ell\subseteq R_3$ the subring of all lower triangular matrices with zero diagonal, it follows that $R_3=S_u+S_\ell+US_uU^{-1}$.  
		
		b) This follows from a) and Lemma \ref{lem:nilp-brace}.
	\end{proof}

	We will apply the above result to endomorphism rings of free modules of infinite rank. Moreover, recall that the vector spaces of infinite rank can be used to construct simple rings. More precisely, if $V$ is an infinite dimensional vector space over a field $F$ then $I=\{f\in \End_F(V)\mid \dim_F(\mathrm{Im}(f))<\dim_F(V)\}$ is the maximal ideal of $\End_{F}(V)$. Therefore, the quotient ring $\End_F(V)/I$ is a simple ring.

	\begin{corollary}\label{cor:inf-rank}
		a) Let $F$ be a free $R$-module of infinite rank. Then the endomorphism ring $\End_R(F)$ is a sum of three nilpotent subrings of nilpotency index $3$. Consequently, there exist three braceable subtrusses $K,L,M$ of the truss $T(\End_R(F))$ such that $\End_R(F)=[K,L,M]$.
		
		b) Let $V$ and $I$ as above. Then the simple ring $\End_F(V)/I$ is a sum of three nilpotent subrings of nilpotency index $3$. Consequently, there exist three braceable subtrusses $K,L,M$ of the truss $T(\End_F(V)/I)$ such that $\End_F(V)/I=[K,L,M]$.
	\end{corollary}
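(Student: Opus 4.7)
The plan is to recognize each of the two rings as the ring of $3\times 3$ matrices over itself and then invoke Proposition \ref{prop:inner-surj}, whose hypothesis requires a single element with surjective inner derivation on the base ring.

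For (a), since $F$ has infinite rank, $F \cong F^{3}$ as $R$-modules, so $S := \End_R(F)$ satisfies $S \cong S_3$ as rings. It therefore suffices to exhibit an $r \in S$ with $\mathrm{ad}_r$ surjective on $S$, because then Proposition \ref{prop:inner-surj} applied to $S$ decomposes $S_3 \cong \End_R(F)$ as a sum of three nilpotent subrings of nilpotency index $3$ and produces the three braceable subtrusses. To construct such an $r$, I would exploit the sharper decomposition $F \cong \bigoplus_{n \in \mathbb{N}} F_n$ with each $F_n \cong F$ (available because $F$ has infinite rank) and take $r$ to be the shift whose restriction to $F_n$ is a fixed isomorphism $\psi_n \colon F_n \xrightarrow{\sim} F_{n-1}$ for $n \geq 1$, and zero on $F_0$. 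Writing an arbitrary $g \in S$ and a prospective preimage $s$ in block form $(g_{ij}), (s_{ij})$ with $g_{ij}, s_{ij} \in \Hom_R(F_j, F_i)$, the identity $rs - sr = g$ becomes the forward recurrence
\[
s_{i+1,j} \;=\; \psi_{i+1}^{-1}\bigl(g_{ij} + s_{i,j-1}\,\psi_j\bigr),
\]
solvable uniquely by initializing $s_{0,j} = 0$ for all $j$.

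The only non-formal step is to verify that this block matrix really defines an endomorphism of $F$. Unrolling the recurrence expresses $s_{ij}(x)$, for $x \in F_j$, as a sum of at most $j+1$ terms of the shape $\psi_i^{-1}\cdots\psi_{i-k}^{-1}\,g_{i-1-k,\,j-k}(y_k)$ with $y_k := \psi_{j-k+1}\cdots\psi_j(x) \in F_{j-k}$ independent of $i$; since $g$ is an endomorphism of $F$, each $g_{m,\,j-k}(y_k)$ vanishes for all but finitely many $m$, so each of the $j+1$ terms is zero for all but finitely many $i$, and hence so is $s_{ij}(x)$. This is the main technical obstacle I anticipate; once it is settled, the surjective inner derivation is in place and part (a) follows.

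For (b), two observations close the argument. First, if $\mathrm{ad}_r$ is surjective on a ring $A$ and $J \subseteq A$ is a two-sided ideal, then $\mathrm{ad}_{\bar r}$ is surjective on $A/J$ by lifting, applying $\mathrm{ad}_r$, and projecting. Second, the ring isomorphism $\End_F(V) \cong (\End_F(V))_3$ coming from $V \cong V^{3}$ carries $I$ to the matrix ideal $(I)_3$, because, by infinite cardinal arithmetic, an endomorphism of $V$ written as a $3\times 3$ block has image of dimension less than $\dim V$ if and only if each of its nine blocks does. Combined with (a), these yield $\End_F(V)/I \cong (\End_F(V)/I)_3$ together with a surjective inner derivation on the simple ring $\End_F(V)/I$, and Proposition \ref{prop:inner-surj} applied to this quotient delivers both asserted decompositions.
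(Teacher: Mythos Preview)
Your proof is correct and follows the same route as the paper: identify the ring with $3\times 3$ matrices over itself via $F\cong F^3$ (respectively $V\cong V^3$), produce a surjective inner derivation, and invoke Proposition~\ref{prop:inner-surj}. The only differences are that the paper simply cites \cite[Proposition~12]{Mesyan} for the surjective inner derivation on $\End_R(F)$, whereas you reconstruct that shift-operator argument and verify the column-finiteness of the resulting block matrix by hand (your unrolling argument is sound: for fixed $j$ the number of terms is $j+1$, independent of $i$, and each term dies for large $i$ because $g$ is column-finite); and for part (b) the paper writes only ``it is enough to prove a)'' (the images of the three nilpotent subrings in the quotient do the job), while you instead re-verify the hypotheses of Proposition~\ref{prop:inner-surj} directly for $\End_F(V)/I$, which is an equally valid and slightly more explicit path.
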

	
	\begin{proof}
		It is enough to prove a). Since $F\cong F\oplus F\oplus F$, the endomorphism ring $\End_R(F)$ is isomorphic to the matrix ring $\End_R(F)_3$. It was proved in \cite[Proposition 12]{Mesyan} that the endomorphism ring of an free module of infinite rank has a surjective inner derivation. 
		Using Proposition \ref{prop:inner-surj}, we obtain the conclusion.  
	\end{proof}    
	
	\begin{remark}
		In \cite{Breaz} it is proved that every endomorphism of a free $R$-module $F$ of infinite rank can be decomposed as a sum of four endomorphisms that are annihilated by prescribed splitting polynomials of degree $2$ over the center of $R$. In general only three endomorphisms as before are not enough, \cite{deSequins2}. However, as in the proof of Corollary \ref{cor:inf-rank}, we can use Corollary \ref{cor-de-appl} to conclude that for all splitting polynomials $p_1,p_2,p_3$ of degree $3$ over the center of $R$, every endomorphism $\alpha$ of $F$ can be decomposed as $\alpha=\alpha_1+\alpha_2+\alpha_3$ such that $p_i(\alpha_i)=0$ for all $i=1,2,3$. 
	\end{remark}

	\subsection{Matrices whose traces are sums of commutators}
	
	If $A=(a_{ij})\in R_n$, we associate to $A$ its trace $\Tr(A)=\sum_{i=1}^{n}a_{ii}$.  
	It was proved in \cite{BC} that if $\Tr(A)$ is a sum of $k$-commutators then $A$ can be expressed as a sum of $n+2k+1$ nilpotent matrices. In the following we will improve this bound. We need some preliminary results.
	
	\begin{lemma}\label{lem:descend-b2} Let $n$ be a positive integer, and we denote $\left\lfloor\log_2 n\right\rfloor=m$. We consider the recursive sequence defined by $n_0 = n$ and $n_k = \lfloor\frac{n_{k-1}+1}{2}\rfloor$ for all integers $k\geq 1$. Then $$n_m=\begin{cases}
			1, & \text{ if }n=2^m;\\
			2, & \text{ if } n\neq 2^m.
		\end{cases}$$
		%
		%
		%
	\end{lemma}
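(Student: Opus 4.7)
The plan is first to simplify the recurrence. For any positive integer $x$ one has $\lfloor (x+1)/2\rfloor=\lceil x/2\rceil$, so the recurrence becomes $n_k=\lceil n_{k-1}/2\rceil$.

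Next I would establish the closed form $n_k=\lceil n/2^k\rceil$ by induction on $k$. The inductive step uses the standard nested-ceiling identity $\lceil \lceil y/a\rceil/b\rceil=\lceil y/(ab)\rceil$, valid for positive integers $a,b$ and real $y$, applied with $a=2^{k-1}$, $b=2$, $y=n$. This identity admits a short direct verification: write $y=abq+s$ with $0\le s<ab$ and compare both sides on each residue $s$.

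Plugging $k=m$ into the closed form finishes the argument. The hypothesis $\lfloor\log_2 n\rfloor=m$ is equivalent to $2^m\le n<2^{m+1}$, hence $1\le n/2^m<2$. If $n=2^m$ then $n/2^m=1$ and $n_m=1$; otherwise $1<n/2^m<2$ forces $n_m=\lceil n/2^m\rceil=2$.

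I do not anticipate a real obstacle here; the only mildly non-trivial ingredient is the nested-ceiling identity, which is classical. As an alternative avoiding it altogether, one could argue directly by induction on $m$: the power-of-$2$ case is immediate by repeated halving; in the non-power case one splits on whether $n=2^{m+1}-1$ (where $n_1=2^m$ is a power of $2$ and the previous case delivers $n_m=2$) or $2^m<n<2^{m+1}-1$ (where $2^{m-1}<n_1<2^m$, so $n_1$ is not a power of $2$ with $\lfloor\log_2 n_1\rfloor=m-1$, and the inductive hypothesis at $m-1$ yields $n_m=2$).
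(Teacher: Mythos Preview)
Your argument is correct. The rewriting $\lfloor (x+1)/2\rfloor=\lceil x/2\rceil$ for positive integers $x$ is valid, the nested-ceiling identity $\lceil \lceil y/a\rceil/b\rceil=\lceil y/(ab)\rceil$ holds for positive integers $a,b$, and the closed form $n_k=\lceil n/2^k\rceil$ together with $2^m\le n<2^{m+1}$ immediately gives the dichotomy. Your alternative induction on $m$ also works; the case $n=2^{m+1}-1$ gives $n_1=2^m$, and then the power-of-two computation (halving exactly) shows $n_m=(n_1)_{m-1}=2$, which is what you intended by ``the previous case delivers $n_m=2$''.

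The paper proceeds differently: rather than passing to a closed form, it writes $n=2^m+c$ with $c\in[1,2^m)$ and proves by induction that $n_k=2^{m-k}+c_k$ with $c_k\in[1,2^{m-k}]$, tracking the ``excess'' $c_k=\lfloor (c_{k-1}+1)/2\rfloor$ directly. This leads to $n_{m-1}\in\{3,4\}$ and hence $n_m=2$; the case $n=2^m$ is handled by a separate direct computation. Your approach is slicker and yields more, namely the explicit formula $n_k=\lceil n/2^k\rceil$ for every $k$, at the modest cost of quoting the nested-ceiling identity. The paper's approach avoids any external identity and keeps everything inside elementary integer arithmetic, but it only gives coarse control on the intermediate $n_k$ (an interval of length $2^{m-k}$ for $c_k$) rather than the exact value.
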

	\begin{proof} 
		Suppose that $n=2^m + c$ with $c\in [1,2^m)$, and we will prove by induction that for all integers $0\leq k< m$ we have $n_{k} = 2^{m-k} + c_k$ for some $c_k\in [1,2^{m-k}]$. The verification step is obviously valid from our hypothesis. For the induction step, assume that $n_{k-1} = 2^{m-k+1} + c_{k-1}$ with $c_{k-1}\in [1,2^{m-k+1}]$. Using $k\leq m$, we obtain
		\begin{align*}n_k & =  \left\lfloor\frac{n_{k-1}+1}{2}\right\rfloor = \left\lfloor\frac{2^{m-k+1}+c_{k-1} + 1}{2}\right\rfloor\\ & =   2^{m-k} + \left\lfloor\frac{c_{k-1} + 1}{2}\right\rfloor = 2^{m-k} + c_k,\end{align*}
		where $c_k = \left\lfloor\frac{c_{k-1} + 1}{2}\right\rfloor\in [1, 2^{m-k}]$.
		
		Therefore, $n_{m-1} = 2 + c_{m-1}$, where $c_{m-1}\in [1,2]$, and we obtain $n_{m-1}\in \{3,4\}$, hence $n_m=2$.
		
		If $n=2^m$, it follows by a direct computation that $n_{m}=1$. 
	\end{proof}
	
	

If the following we will denote by $\mathbf{0}_k$ the  zero matrix from $R_k$, while $\mathbf{0}_{k,\ell}$ will denote the zero matrix with $k$ lines and $\ell$ columns. We will suppress the indices if their values can be deduced from context.  

\begin{lemma}\label{lem:redu-case-3}
	If $n\geq 2$ is an integer, and $A\in R_n$ then we can decompose $A$ as a sum 
	$$A=B+T_u+T_\ell+ \begin{pmatrix} \mathbf{0}_{n-v} & \mathbf{0}_{n-v,v}\\ \mathbf{0}_{v,n-v} & D \end{pmatrix},$$ where $B$ is a sum of $\lfloor\log_2(n)\rfloor$ nilpotent matrices with trace zero (of nilpotency index $2$), $T_u$ is an upper triangular matrix with zero diagonal, $T_\ell$ is a lower triangular matrix with zero diagonal, $D\in R_v$, and $v\in \{1,2\}$.
\end{lemma}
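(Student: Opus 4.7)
The plan is to prove the result by induction on $n$, based on a single ``halving'' step that, given $A\in R_n$ with $n\geq 2$, splits off one nilpotent matrix of index $2$ with trace $0$, one strictly upper triangular summand, and one strictly lower triangular summand, and leaves as residual a single bottom-right block of size $\lfloor(n+1)/2\rfloor$. Iterating this step $m=\lfloor\log_2(n)\rfloor$ times and invoking Lemma~\ref{lem:descend-b2} then yields a residual in $R_v$ with $v\in\{1,2\}$, as required.

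For the halving step, I would set $k=\lfloor n/2\rfloor$ and $k'=n-k$ and write
\[
A = \begin{pmatrix} A_{11} & A_{12}\\ A_{21} & A_{22}\end{pmatrix}
\]
with blocks of sizes $k\times k$, $k\times k'$, $k'\times k$ and $k'\times k'$. The concrete candidate is
\[
N = \begin{pmatrix} A_{11} & E \\ F & G\end{pmatrix},\quad
E = \begin{pmatrix} I_k & \mathbf{0}\end{pmatrix},\quad
F = \begin{pmatrix} -A_{11}^{\,2} \\ \mathbf{0}\end{pmatrix},\quad
G = \begin{pmatrix} -A_{11} & \mathbf{0} \\ \mathbf{0} & \mathbf{0}\end{pmatrix},
\]
with the padding blocks chosen to match the shapes of $A_{12}$, $A_{21}$ and $A_{22}$. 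A direct block computation shows that the four equations for $N^2=0$ all collapse to cancellations of $A_{11}^{\,2}$ and $A_{11}^{\,3}$ with their negatives, and $\Tr(N)=\Tr(A_{11})+\Tr(G)=0$ by inspection. Subtracting $N$ from $A$ annihilates the $(1,1)$ block; the $(1,2)$ block $A_{12}-E$ lies strictly above the diagonal of $A$ and so can be absorbed in $T_u$, and the $(2,1)$ block $A_{21}-F$ lies strictly below the diagonal and goes into $T_\ell$; the $(2,2)$ block $A_{22}-G\in R_{k'}$ becomes the new residual. Since $k'=\lceil n/2\rceil=\lfloor(n+1)/2\rfloor$, this matches exactly the recurrence in Lemma~\ref{lem:descend-b2}.

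Applying the halving step recursively to the residual and padding everything back into $R_n$ by zeros in the top-left corner preserves all required properties: the padded nilpotent summands remain square-zero with trace $0$, and the strictly upper/lower triangular summands at each level aggregate into one $T_u$ and one $T_\ell$. After $m=\lfloor\log_2(n)\rfloor$ steps, Lemma~\ref{lem:descend-b2} guarantees that the final residual sits in $R_v$ with $v\in\{1,2\}$, and the accumulated nilpotent part $B$ is a sum of exactly $m$ matrices with the claimed properties.

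The main obstacle is the nilpotency check for $N$ in the asymmetric case $k<k'$ (i.e., $n$ odd): one has to verify $A_{11}E+EG=0$, $FA_{11}+GF=0$ and $FE+G^2=0$ with the specific padded shapes above, and see that the padding in $E$, $F$, $G$ interacts correctly with the $-A_{11}$ blocks. Once this elementary but slightly finicky verification is done, the remainder of the argument is just the induction and a direct invocation of Lemma~\ref{lem:descend-b2} for the exit size $v$.
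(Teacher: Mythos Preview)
Your argument is correct, and its overall architecture matches the paper's: produce one square-zero, trace-zero matrix that moves the top half of the diagonal into the bottom half, leaving a residual of size $\lfloor (n+1)/2\rfloor$, then iterate and invoke Lemma~\ref{lem:descend-b2} to stop at $v\in\{1,2\}$ after exactly $\lfloor\log_2 n\rfloor$ steps. The block checks you flag as ``finicky'' in the odd case do go through exactly as you indicate.

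The only real difference is in the choice of the square-zero matrix at each step. The paper's $N_n(A)$ is built from the \emph{scalar} diagonal entries alone: for each $i\le \lfloor n/2\rfloor$ it places the $2\times 2$ block $\left(\begin{smallmatrix} a_{ii} & a_{ii}\\ -a_{ii} & -a_{ii}\end{smallmatrix}\right)$ on rows and columns $\{i,\,n+1-i\}$, so its entries are just $\pm a_{ii}$. Your $N$ instead annihilates the entire block $A_{11}$ at once, at the cost of introducing $A_{11}^{2}$ in the $(2,1)$ block. Both constructions serve the same purpose; the paper's is a bit more economical (the nilpotent summands depend only on $\diag(A)$, which is all the subsequent application in Proposition~\ref{prop:nilp-dec} actually needs), while yours has the pleasant feature that the whole top-left corner of $A-N$ is already zero rather than merely having zero diagonal.
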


\begin{proof} If $A=(a_{ij})\in R_n$, we consider the matrix $N_n(A)$ defined as follows:
$$N_{n}(A) = \begin{pmatrix}
	a_{11}         & \dots  & 0       & 0    & 0       & \dots          & a_{11} \\
	0         & \dots  & 0       & 0    & 0       & \dots     & 0      \\
	\dots         & \dots  & \dots      & \dots   & \dots        & \dots       & \dots     \\
	0              & \dots  & a_{kk}  & 0    & a_{kk}  & \dots         & 0      \\
	0              & \dots  & 0       & 0    & 0       & \dots          & 0      \\
	0              & \dots  & -a_{kk} & 0    & -a_{kk} & \dots          & 0      \\
	\dots         & \dots  & \dots      & \dots   & \dots        & \dots       & \dots     \\
	0        & \dots  & 0       & 0    & 0       & \dots   & 0      \\
	-a_{11}        & \dots  & 0       & 0    & 0       & \dots          & -a_{11}\\
\end{pmatrix} \textrm{ for }n=2k+1,$$
respectively
$$N_{n}(A) = \begin{pmatrix}
	a_{11}  & \dots  & 0      & 0       & \dots   & a_{11} \\
	0       & \dots  & 0      & 0       & \dots   & 0      \\
	\dots   & \dots  & \dots  & \dots   & \dots   & \dots  \\
	0       & \dots  & a_{kk} & a_{kk}  & \dots   & 0      \\
	0       & \dots  & -a_{kk} & -a_{kk} & \dots  & 0      \\
	\dots   & \dots  & \dots   & \dots   & \dots  & \dots  \\
	0       & \dots  & 0      & 0       & \dots   & 0      \\
	-a_{11} & \dots  & 0      & 0       & \dots   & -a_{11}\\
\end{pmatrix} \textrm{ for }n=2k.$$

Then $A = N_n(A) + \begin{pmatrix}
	\mathbf{0}_{\left\lfloor\frac{n}{2}\right\rfloor} & \mathbf{0}\\ \mathbf{0} & D_{\left\lfloor\frac{n+1}{2}\right\rfloor}	\end{pmatrix} + B$, where $B$ is a matrix with zero diagonal, and $D_{\left\lfloor\frac{n+1}{2}\right\rfloor}\in R_{\left\lfloor\frac{n+1}{2}\right\rfloor}$. This decomposition provides for $n\in\{2,3\}$ the desired conclusion. 

{Assume that $n\geq 4$.} As in Lemma \ref{lem:descend-b2}, we define the sequence $(n_k)$ by
$ n_0 = n\geq 4$ and $n_k = \left\lfloor\frac{n_{k-1}+1}{2}\right\rfloor$. Inductively, we obtain the decompositions: 
\begin{align*}A =& N_{n_0}(A) + \begin{pmatrix} \mathbf{0}_{n-n_1} & \mathbf{0}\\ \mathbf{0} & A_1 \end{pmatrix} + B_1\\ 
	= & N_{n_0}(A) + \begin{pmatrix} \mathbf{0}_{n-n_1} & \mathbf{0}\\ \mathbf{0} & N_{n_1}(A_1) \end{pmatrix} + \begin{pmatrix} \mathbf{0}_{n-n_2} & \mathbf{0}\\ \mathbf{0} & A_2 \end{pmatrix} + B_2 = \dots \\ = & N_{n_0}(A) + \begin{pmatrix} \mathbf{0}_{n-n_1} & \mathbf{0}\\ \mathbf{0} & N_{n_1}(A_1) \end{pmatrix} + \begin{pmatrix} \mathbf{0}_{n-n_2} & \mathbf{0}\\ \mathbf{0} & N_{n_2}(A_2) \end{pmatrix} + \dots \\ 
	& + \begin{pmatrix} \mathbf{0}_{n-n_{[\log_2 n] - 2}} & \mathbf{0}\\ \mathbf{0} & N_{n_{[\log_2 n] - 2}}(A_{n_{[\log_2 n] - 2}}) \end{pmatrix} + \begin{pmatrix} \mathbf{0}_{n-v} & \mathbf{0}\\ \mathbf{0} & A_v \end{pmatrix} + B_v,\end{align*} where $v = n_{[\log_2 n] - 1}\in \{2, 3, 4\}$, and $B_1,\dots,B_v$ are matrices with zero diagonals. 

If $v\in\{2,3\}$ we apply one more step by using the decomposition presented in the beginning of the proof. If $v = 4$, then $\begin{pmatrix} \mathbf{0}_v & \mathbf{0}\\ \mathbf{0} & A_v \end{pmatrix} = \begin{pmatrix} \mathbf{0}_{n-2} & \mathbf{0}\\ \mathbf{0} & N_2(A_v) \end{pmatrix} + \begin{pmatrix} \mathbf{0}_{n-2} & \mathbf{0}\\ \mathbf{0} & A_{v+1} \end{pmatrix} + B_{v+1}$, where $B_{v+1}$ is a matrix with zero diagonal.

Therefore, we can decompose the matrix $A$ as a sum of $[\log_2 n]$ nilpotent matrices with zero traces, a matrix of the form $\begin{pmatrix} \mathbf{0}_{n-v} & \mathbf{0}\\ \mathbf{0} & D_v \end{pmatrix}$ with {$v\in \{1,2\}$}, and a matrix with zero diagonal.
%
%
\end{proof}

\begin{proposition}\label{prop:nilp-dec}
Suppose $n\geq 3$. If $A\in R_n$ and $\Tr(A)$ is a sum of $k$-commutators then $A$ is a sum of $\lfloor\log_2(n)\rfloor + k+2$ 
nilpotent matrices. 
\end{proposition}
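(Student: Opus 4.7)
The plan is to combine Lemma~\ref{lem:redu-case-3} with Corollary~\ref{cor-de-appl}. Begin by applying Lemma~\ref{lem:redu-case-3} to write
$$A = B + T_u + T_\ell + \begin{pmatrix} \mathbf{0} & \mathbf{0}\\ \mathbf{0} & D\end{pmatrix},$$
where $B$ is a sum of $\lfloor\log_2(n)\rfloor$ nilpotent matrices (each of trace zero), $T_u$ and $T_\ell$ are strictly upper/lower triangular in $R_n$ (hence nilpotent), and $D\in R_v$ with $v\in\{1,2\}$. Since $B$, $T_u$ and $T_\ell$ all have trace zero, $\Tr(D)=\Tr(A)=c_1+\cdots+c_k$ is a sum of $k$ commutators, and it suffices to express the remainder $\tilde A:=T_u+T_\ell+\begin{pmatrix} \mathbf{0} & \mathbf{0}\\ \mathbf{0} & D\end{pmatrix}$ as a sum of $k+2$ nilpotent matrices in $R_n$.

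The key point is to localize the analysis to the bottom-right $3\times 3$ corner of the matrix, which is possible because $v\leq 2\leq 3$ places all of $D$ inside it. Split $T_u=T_u^{\mathrm{out}}+T_u^{\mathrm{in}}$ and $T_\ell=T_\ell^{\mathrm{out}}+T_\ell^{\mathrm{in}}$ according to whether the nonzero entries lie outside or inside this bottom corner; each ``out'' part remains strictly triangular in $R_n$. Regard $C:=T_u^{\mathrm{in}}+T_\ell^{\mathrm{in}}+\begin{pmatrix} \mathbf{0} & \mathbf{0}\\ \mathbf{0} & D\end{pmatrix}$ as an element of $R_3$; its trace is $c_1+\cdots+c_k$. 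Split $C=C_1+\cdots+C_k$ with $\Tr(C_i)=c_i$ (for example, concentrate the off-diagonal entries of $C$ in $C_1$ and set $C_j=c_j E_{11}$ for $j\geq 2$).

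Now apply Corollary~\ref{cor-de-appl} to each $C_i$ with all parameters $s_j=t_j=u_j=0$; the required hypothesis then reduces to ``$c_i$ is a commutator'', which holds. This yields $C_i=M_{i,1}+M_{i,2}+M_{i,3}$ where $M_{i,1}$ is a conjugate of a strictly upper triangular $R_3$-matrix, $M_{i,2}$ is strictly upper triangular, and $M_{i,3}$ is strictly lower triangular — all three nilpotent. Setting $S_u:=T_u^{\mathrm{out}}+\sum_i M_{i,2}$ and $S_\ell:=T_\ell^{\mathrm{out}}+\sum_i M_{i,3}$ yields strictly upper/lower triangular matrices in $R_n$, each a single nilpotent, so $\tilde A=S_u+S_\ell+\sum_{i=1}^{k}M_{i,1}$ is a sum of $k+2$ nilpotents. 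Together with the $\lfloor\log_2(n)\rfloor$ summands of $B$, this gives the claim. The main obstacle is this bundling step: applying Corollary~\ref{cor-de-appl} to each $C_i$ separately would give $3k$ nilpotents, and the saving to $k+2$ relies on the fact that sums of strictly triangular matrices are again strictly triangular, so the $k$ triangular summands of each type collapse into one nilpotent, while the conjugated summands $M_{i,1}$ (with different conjugating matrices) must stay separate.
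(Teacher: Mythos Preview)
Your proof is correct and follows essentially the same approach as the paper: both reduce via Lemma~\ref{lem:redu-case-3} to a $3\times 3$ block, split it into $k$ matrices each with a commutator trace, apply the $s=t=u=0$ case of Lemma~\ref{lemma-matrici}/Corollary~\ref{cor-de-appl} to each, and then collapse the resulting strictly upper (resp.\ lower) triangular summands together with $T_u$ (resp.\ $T_\ell$) into a single nilpotent. The paper's write-up is marginally more streamlined in that it never splits $T_u,T_\ell$ into ``in/out'' parts---it simply pads $D$ with a zero row and column to obtain $C\in R_3$ and then combines the triangular pieces $K,L$ coming out of the $3\times 3$ argument with the full $T_u,T_\ell$---but this is purely cosmetic.
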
 

\begin{proof}
We will start with the case $n=3$ and $k=1$. In this case, we apply Lemma \ref{lemma-matrici} for $s=t=u=0$ to conclude that there is a nilpotent matrix $N$ such that 
$\diag(N)=\diag(A)$. It follows that $A=N+K+L$, where $K$ is an upper triangular matrix, $L$ is a lower triangular matrix, and $\diag(K)=\diag(L)=(0,0,0)$.

For $n=3$ and $k>1$ we write $A=A_1+\dots+A_k$ such that the traces of all matrices $A_i$ are commutators. We consider, for all $i=\overline{1,n}$ decompositions as above $A_i=N_i+K_i+L_i$ such that $N_i$ are nilpotent matrices, $K_i$ are upper triangular, and $L_i$ are lower triangular matrix, such that $\diag(K_i)=\diag(L_i)=(0,0,0)$. 
It follows that $A=N_1+\dots+N_k+K+L$, where $K=K_1+\dots+K_k$ and $L=L_1+\dots+L_k$ are triangular matrices with zero diagonals. 

Coming back to the general case, we apply Lemma \ref{lem:redu-case-3} to observe that we have a decomposition  $$A=B+T_u+T_l+ \begin{pmatrix} \mathbf{0}_{n-v} & \mathbf{0}\\ \mathbf{0} & D \end{pmatrix},$$ where $B$ is a sum of $\lfloor\log_2(n)\rfloor$ nilpotent matrices with trace zero (of nilpotency index $2$), $T_u$ is an upper triangular matrix with zero diagonal, $T_l$ is a lower triangular matrix with zero diagonal, and $v\in \{1,2\}$. Since $n\geq 3$, there exists a matrix $C\in R_3$ such that $$\begin{pmatrix} \mathbf{0}_{n-v} & \mathbf{0}\\ \mathbf{0} & D \end{pmatrix}=\begin{pmatrix} \mathbf{0}_{n-3} & \mathbf{0}\\ \mathbf{0} & C \end{pmatrix}.$$
We have $\Tr(A)=\Tr(C)$, so we have a decomposition $C=N_1+\dots+N_k+K+L$, where $N_1,\dots,N_k$ are nilpotent matrices (of nilpotency index $3$),  $K$ is an upper triangular matrix with zero diagonal, and $L$ is a lower triangular matrices with zero diagonal.

It follows that $A$ can be written 
\begin{align*}A=B& + \begin{pmatrix} \mathbf{0}_{n-3} & \mathbf{0}\\ \mathbf{0} & N_1 \end{pmatrix}+\dots +\begin{pmatrix} \mathbf{0}_{n-3} & \mathbf{0}\\ \mathbf{0} & N_k \end{pmatrix} \\ &+ \left(T_u+\begin{pmatrix} \mathbf{0}_{n-3} & \mathbf{0}\\ \mathbf{0} & K \end{pmatrix}\right)+\left(T_l+\begin{pmatrix} \mathbf{0}_{n-3} & \mathbf{0}\\ \mathbf{0} & L \end{pmatrix}\right),\end{align*}
a sum of $\lfloor\log_2(n)\rfloor+k+2$ nilpotent matrices.
\end{proof}

\begin{remark}
For the decomposition obtained in the proof of Theorem \ref{prop:nilp-dec} we have more details about the nilpotent matrices: $\lfloor\log_2(n)\rfloor$ are of nilpotency index $2$, and $k$ of them are of nilpotency index $3$. Using the proof of \cite[Theorem 1]{Breaz}, it follows that every $2\times 2$ matrix whose trace is a commutator can be decomposed as a sum of two nilpotent matrices of nilpotency index $2$ and a matrix with zero diagonal. We can addapt the proof of Proposition \ref{prop:nilp-dec} to conclude that every matrix $A\in R_n$ such that $\Tr(A)$ is a sum of $k$-commutators can be decomposed as a sum of $\lfloor\log_2(n)\rfloor + 2k$ square zero matrices, an upper triangular matrix with zero diagonal and a lower triangular matrix with zero diagonal. Since every triangular matrix from $R_n$ with zero diagonal is a sum of $(n-1)$ square zero matrices, it follows that every matrix $A$ with $\Tr(A)$ a sum of $k$ commutators can be decomposed as a sum of  $\lfloor\log_2(n)\rfloor + 2k+2(n-1)$ square-zero matrices.
\end{remark}

\section*{Acknowledgements}
The research of S.\ Breaz is supported by a grant of the Ministry of Research, Innovation and Digitization, CNCS/CCCDI--UEFISCDI, project number PN-III-P4-ID-PCE-2020-0454, within PNCDI III. 

The research of C. Rafiliu is supported by a Babe\c s-Bolyai University Young Fellowship.


\begin{thebibliography}{99}

\bibitem{Bok}  L.A. Bokut,
\emph{Embedding  into simple associative  algebras}, Algebra  i  Logika {\bf 15} (1976),  117--142.

\bibitem{Breaz}		S. Breaz,
\emph{Endomorphisms of free modules as sums of four quadratic endomorphisms}, Linear and Multilinear Algebra 66 (2018), 2215--2217.

\bibitem{BC} S. Breaz, Gr.  C\u alug\u areanu, \emph{Sums  of  nilpotent  matrices},  Linear and Multilinear  Algebra 65 (2017), 67--78.

\bibitem{BP} A. Brown, C. Pearcy, \emph{Structure of commutators of operators}, Ann. Math. 82 (1965), 112-–127.

\bibitem{Brz} T. Brzeziński, \emph{Trusses: paragons, ideals and modules}, J. Pure Appl. Algebra 224 (2020), Article ID 106258, 39 p. 

\bibitem{Brz-R} T. Brzeziński, B. Rybołowicz, \emph{Congruence classes and extensions of rings with an application to braces}, Commun. Contemp. Math. 23 (2021), Article ID 2050010, 24 p.  

\bibitem{Che} M. Chebotar, P.-H. Lee and E.R. Puczyłowski,
\emph{On commutators and nilpotent elements in simple rings}, Bull. London Math. Soc. 42 (2010) 191--194.

\bibitem{Cohn} P. M. Cohn, \emph{Skew fields. Theory of general division rings}, Encyclopedia of Mathematics and Its Applications. 57. Cambridge: Cambridge Univ. Press. xv, 1995. 

\bibitem{Fac} T. Fack, \emph{Finite sums of commutators in $\mathbb{C}^\star$-algebras}, Ann. Inst. Fourier, Grenoble 32 (1982), 129--137.

\bibitem{Hal} R. Halmos, \emph{Commutators of operators. II}, Amer. J. Math. 76 (1954), 191--198.

\bibitem{Har} B. Harris, \emph{Commutators in division rings}, {Proc. Am. Math. Soc.} 4 (1958), 628--630.

\bibitem{Her} I. N. Herstein, \emph{Lie and Jordan structures in simple associative rings}, Bull. Amer. Math. Soc. 67 (1961), 517--531.

\bibitem{Mar} L. W. Marcoux, \emph{On the linear span of the projections in certain simple $\mathbb{C}^\star$-algebras}, Indiana Univ. Math. J. 51 (2002), 753--771.

\bibitem{Mesyan}	Z. Mesyan,
\emph{Commutator rings}, Bulletin of the Australian Mathematical Society 74 (2007), 279--288.

\bibitem{Rump} W. Rump, \emph{Braces, radical rings, and the quantum Yang-Baxter equation}, J. Algebra 307 (2007), 153--170.

\bibitem{Salwa}		A. Salwa,
\emph{Simple artinian rings as sums of nilpotent subrings},	Archiv der Mathematik 78 (2002), 350--357.

\bibitem{deSequins1}		C. de Seguins Pazzis,
\emph{Sums of quadratic endomorphisms of an infinite-dimensional vector space}, Proceedings of the Edinburgh Mathematical Society 61 (2018), 437--447. 

\bibitem{deSequins2}		C. de Seguins Pazzis,
\emph{Sums of three quadratic endomorphisms of an infinite-dimensional vector space}, Acta Scientiarum Mathematicarum 83 (2017), 83--111.
\end{thebibliography}
\end{document}